\theoremstyle{plain}
\newtheorem{theorem}{Theorem}[section]
\newtheorem{lemma}{Lemma}[section]
\newtheorem{proposition}{Proposition}[section]
\newtheorem{corollary}{Corollary}[section]
\newtheorem{definition}{Definition}[section]
\theoremstyle{definition}
\newtheorem{example}{Example}[section]
\newcommand{\keywords}{\textbf{Key words. }\medskip}
\newcommand{\subjclass}{\textbf{MSC 2020. }\medskip}
\renewcommand{\abstract}{\textbf{Abstract. }\medskip}
\numberwithin{equation}{section}
\DeclareMathOperator{\diam}{diam}
\newcommand{\RR}{\mathbb{R}}
\begin{document}

\title{Quasisymmetric mappings in b-metric spaces}

\author{Evgeniy A. Petrov, Ruslan R. Salimov}



\date{}

\maketitle

\begin{abstract}
Considering quasisymmetric mappings between b-metric spaces we have found a new estimation for the ratio of diameters of two subsets which are images of two bounded subsets. This result generalizes the well-known Tukia-V\"{a}is\"{a}l\"{a} inequality. The condition under which the image of a b-metric space under quasisymmetry is also a b-metric space is established. Moreover, the latter question is investigated for additive metric spaces.
\end{abstract}

\subjclass{54E40, 54E25}

\keywords{b-metric space, quasisymmetric mapping, additive metric space}

\section{Introduction}

Quasisymmetric mappings on the real line were first introduced by Beurling and Ahlfors~\cite{BA56}.
They found a way to obtain a quasiconformal extension of a quasisymmetric self-mapping of the real axis to a self-mapping of the upper half-plane.
This concept was later generalized by Tukia and V\"{a}is\"{a}l\"{a} in their seminal paper~\cite{TV80}, who studied quasisymmetric mappings between general metric spaces.
In the recent years, these mappings are being intensively studied by many mathematicians, see, e.g.,~\cite{AKT05, T98, HL15, BM13, YVZ18, AB19}.

Recall that a \textit{metric} on a set $X$ is a function $d\colon X\times X\rightarrow \mathbb{R}^+$, $\mathbb R^+ = [0,\infty)$, such that for all $x, y, z \in X$:
\begin{itemize}
\item[(i)] $d(x,y)=d(y,x)$,
\item[(ii)] $(d(x,y)=0)\Leftrightarrow (x=y)$,
\item[(iii)] $d(x,y)\leq d(x,z)+d(z,y)$.
\end{itemize}
If inequality \((iii)\) is replaced by $d(x,y)\leq K(d(x,z)+d(z,y))$, $K\geqslant 1$, then  $(X,d)$ is called a \emph{b-metric space}. In the case if only condition (i) and (ii) hold $(X,d)$ is called a \emph{semimetric space} (see, for example, \cite[p.~7]{Bl}). Note that every metric space is a b-metric space (with any $K\geqslant 1$) and every b-metric space is a semimetric space.

Initially, the definition of b-metric space was introduced by Czerwik~\cite{C93} in 1993 as above only with the fixed $K=2$. After that, in 1998, Czerwik~\cite{C98} generalized this notion where the constant 2 was replaced by a constant $K\geqslant 1$, also with the same name b-metric. In 2010, Khamsi and Hussain~\cite{KH10} reintroduced the notion of a b-metric under the name metric-type. Many fixed point theorems in the literature were considered on b-metric spaces, see, e.g.~\cite{KH10, ABKM12, 2ABKM12, HSA14, CP14} and~\cite[pp. 113--131]{KS14}. The notion of a b-metric  admits many generalizations, e.g., cone b-metric~\cite{HS11}, b-metric like~\cite{AHS13}, partial b-metric~\cite{S14}. Se also~\cite{MRPK13} and~\cite{Va15} for another modifications and generalizations.


In the present paper we focus on some properties of quasisymmetric mappings between b-metric spaces.

As it is adopted in the theory of quasiconformal mappings, under \emph{embedding} we understand \emph{injective} \emph{continuous} mapping between metric spaces with continuous inverse mapping. In other words, an embedding is a homeo\-morp\-hism on its image.

The definition of $\eta$-quasisymmetry between metric spaces from~\cite{TV80} can be easily generalized to the case of semimetric spaces as follows.

\begin{definition}\label{d1.1}
Let $(X,d)$, $(Y,\rho)$ be semimetric spaces. We shall say that an embedding $f\colon X\to Y$ is $\eta$-\emph{quasisymmetry} if there is a homeomorphism $\eta\colon \mathbb{R}^{+}\to \mathbb{R}^{+}$ so that
\begin{equation}\label{e0}
d(x,a)\leqslant t d(x,b)  \, \text{ implies } \, \rho(f(x),f(a))\leqslant \eta(t)\rho(f(x),f(b))
\end{equation}
for all triples $a,b,x$ of points in $X$ and for every $t \in \mathbb{R}^{+}$.
\end{definition}

Recall that the quantity
$$
\diam A=\sup\{d(x,y)\colon x,y\in A\}.
$$
is the \emph{diameter} of the set $A$ in the semimetric space $(X,d)$.

The following proposition was proved in~\cite{TV80}, see also~\cite[Propositon 10.8]{H01} for the extended proof.

\begin{proposition}\label{p1.2}
Let $X, Y$ be metric spaces and let $f$ be $\eta$-quasi\-sym\-metry. Let $A\subset B\subset X$ with $\diam A>0$, $\diam B<\infty$. Then $\diam f(B)<\infty$ and
\begin{equation}\label{e11}
  \frac{1}{2\eta\left(\frac{\diam B}{\diam A}\right)}\leqslant
  \frac{\diam f(A)}{\diam f(B)}\leqslant
  \eta\left( \frac{2\diam A}{\diam B}\right).
\end{equation}
\end{proposition}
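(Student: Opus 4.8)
The plan is to treat the two inequalities separately, in each case comparing images of points of $A$ with images of points of $B$ through the quasisymmetry condition~\eqref{e0}, using the triangle inequality in $Y$ to split a distance by a factor $2$. First, some preliminaries: a homeomorphism $\eta\colon\mathbb{R}^{+}\to\mathbb{R}^{+}$ is strictly increasing with $\eta(0)=0$, hence non-decreasing and continuous, and since $A\subset B$ we have $0<\diam A\leqslant\diam B<\infty$, so both arguments $\diam B/\diam A$ and $2\diam A/\diam B$ of $\eta$ in~\eqref{e11} lie in $\mathbb{R}^{+}$. For finiteness of $\diam f(B)$: since $\diam A>0$ there are $a,b\in A$ with $d(a,b)>0$; for an arbitrary $u\in B$ we have $a\in A\subset B$, so $d(a,u)\leqslant\diam B=\frac{\diam B}{d(a,b)}\,d(a,b)$, and~\eqref{e0} applied to the triple $(a,u,b)$ gives $\rho(f(a),f(u))\leqslant\eta\!\left(\tfrac{\diam B}{d(a,b)}\right)\rho(f(a),f(b))$. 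The right-hand side is a finite constant independent of $u$, so $f(B)$ lies in a bounded ball about $f(a)$ and $\diam f(B)<\infty$. (Injectivity of $f$ together with $\diam A>0$ also gives $\diam f(A)>0$, so the ratios in~\eqref{e11} are well defined.)

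For the upper bound, fix $x,y\in A$ and take $u,v\in B$ with $d(u,v)>0$. Since $d(u,v)\leqslant d(x,u)+d(x,v)$, one of $d(x,u),d(x,v)$ is at least $d(u,v)/2$; relabelling, say $d(x,u)\geqslant d(u,v)/2$. Then
\[
d(x,y)\leqslant\diam A=\frac{2\diam A}{d(u,v)}\cdot\frac{d(u,v)}{2}\leqslant\frac{2\diam A}{d(u,v)}\,d(x,u),
\]
so~\eqref{e0} applied to the triple $(x,y,u)$ yields $\rho(f(x),f(y))\leqslant\eta\!\left(\tfrac{2\diam A}{d(u,v)}\right)\rho(f(x),f(u))\leqslant\eta\!\left(\tfrac{2\diam A}{d(u,v)}\right)\diam f(B)$, the last step because $x,u\in B$. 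Now pick $u_n,v_n\in B$ with $d(u_n,v_n)\to\diam B$ (these have $d(u_n,v_n)>0$ for large $n$); letting $n\to\infty$ and using continuity of $\eta$ gives $\rho(f(x),f(y))\leqslant\eta\!\left(\tfrac{2\diam A}{\diam B}\right)\diam f(B)$, and taking the supremum over $x,y\in A$ proves the right inequality in~\eqref{e11}.

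For the lower bound, fix $u,v\in B$ and take $a,b\in A$ with $d(a,b)>0$. Since $d(a,u)\leqslant\diam B=\frac{\diam B}{d(a,b)}d(a,b)$ and likewise $d(a,v)\leqslant\frac{\diam B}{d(a,b)}d(a,b)$, two applications of~\eqref{e0} (to the triples $(a,u,b)$ and $(a,v,b)$) give $\rho(f(a),f(u)),\,\rho(f(a),f(v))\leqslant\eta\!\left(\tfrac{\diam B}{d(a,b)}\right)\diam f(A)$, whence, by the triangle inequality in $Y$, $\rho(f(u),f(v))\leqslant\rho(f(a),f(u))+\rho(f(a),f(v))\leqslant 2\,\eta\!\left(\tfrac{\diam B}{d(a,b)}\right)\diam f(A)$. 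Choosing $a_n,b_n\in A$ with $d(a_n,b_n)\to\diam A$ and passing to the limit (continuity of $\eta$) gives $\rho(f(u),f(v))\leqslant 2\,\eta\!\left(\tfrac{\diam B}{\diam A}\right)\diam f(A)$; taking the supremum over $u,v\in B$ and rearranging yields the left inequality in~\eqref{e11}. The only real subtlety throughout is that $\diam A$ and $\diam B$ are suprema that need not be attained, so one cannot substitute them directly into~\eqref{e0}; this is exactly why the approximation-and-limit steps are needed and why continuity of $\eta$ — not merely monotonicity — is used, and why $\diam f(B)<\infty$ must be secured beforehand so the limiting inequalities are not vacuous.
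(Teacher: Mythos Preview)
Your proof is correct. The finiteness argument and the upper bound follow the same line as the paper (which proves the $b$-metric generalisation, Theorem~\ref{t2.2}, and recovers this statement as Corollary~\ref{c2.4}): pick near-diametral pairs in $B$, use the triangle inequality in the domain to get a comparison point at distance $\geqslant d(u,v)/2$, feed that into~\eqref{e0}, and pass to the limit. Where you diverge is the lower bound. The paper does not argue it directly: it invokes Proposition~\ref{p2.1} to say that $f^{-1}$ is $\eta'$-quasisymmetric with $\eta'(t)=1/\eta^{-1}(1/t)$, applies the already-proved right-hand inequality to $f^{-1}$, and unwinds. You instead bound $\rho(f(a),f(u))$ and $\rho(f(a),f(v))$ separately by $\eta(\diam B/d(a,b))\diam f(A)$ and combine them via the triangle inequality in $Y$, then approximate $d(a,b)\to\diam A$. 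Your route is more self-contained (no inverse-quasisymmetry lemma needed) and makes transparent that the factor $2$ on the left comes from the triangle inequality in the \emph{target}, just as the $2$ inside $\eta$ on the right comes from the triangle inequality in the \emph{source}; in particular your argument would carry over verbatim to the $b$-metric setting, producing the same $2K_2$ on the left. The paper's duality approach, on the other hand, gets both inequalities from one computation plus an inversion, which is economical once Proposition~\ref{p2.1} is in hand.
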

In Theorem~\ref{t2.2}  we prove the analog of this proposition  in the case when $X$ and $Y$ are b-metric spaces.

Metric-preserving functions were in detail studied by many  mathematicians. Recall that a function $f\colon \RR^1\to \RR^1$ preserves metric if the composition $f\circ d$ is a metric on $X$ for any metric space $(X,d)$. Functions preserving metrics and b-metrics were studied, for example, in~\cite{D96,DM13} and in~\cite{KP18, SKP20}, respectively. In this paper we understand the property to preserve b-metricity in another way. In Theorem~\ref{t3.1} we describe $\eta$-quasisymmetries $f$ for which the image $f(X)$ of a b-metric space $X$ with the coefficient $K_1$ is again b-metric space with another coefficient $K_2$.  The analogous question for the case of additive metric spaces is investigated in Theorem~\ref{t4.1}.

\section{Distortion of diameters}

For the proof of the main result of this section we need the following proposition which is a generalization of a corresponding result from~\cite{TV80} to the case of semimetric spaces.

\begin{proposition}\label{p2.1}
Let $(X,d)$ and $(Y,\rho)$ be semimetric spaces.
If $f\colon X\to Y$ is $\eta$-quasisymmetric, then $f^{-1}\colon f(X)\to X$ is $\eta'$-quasisymmetric, where
\begin{equation}\label{e81}
\eta'(t) = 1 / \eta^{-1}(t^{-1})
\end{equation}
 for $t>0$.
\end{proposition}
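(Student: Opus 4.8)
The plan is to first verify that $\eta'$, defined by \eqref{e81}, is genuinely a homeomorphism of $\mathbb{R}^{+}$ onto itself, and then to transfer the defining implication \eqref{e0} from $f$ to $f^{-1}$ by a short argument by contradiction.

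For the first part I would record that any homeomorphism $\eta\colon\mathbb{R}^{+}\to\mathbb{R}^{+}$ is strictly increasing and satisfies $\eta(0)=0$: removing $0$ from $[0,\infty)$ leaves a connected set while removing any $c>0$ does not, so $\eta(0)=0$, and a continuous bijection of an interval attaining its minimum at the left endpoint is increasing; the same is then true of $\eta^{-1}$. Since $t\mapsto t^{-1}$ is an order-reversing homeomorphism of $(0,\infty)$, the map $t\mapsto\eta^{-1}(t^{-1})$ is a decreasing homeomorphism of $(0,\infty)$ and hence $\eta'(t)=1/\eta^{-1}(t^{-1})$ is an increasing homeomorphism of $(0,\infty)$. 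As $t\to 0^{+}$ we have $t^{-1}\to\infty$, so $\eta^{-1}(t^{-1})\to\infty$ and $\eta'(t)\to 0$; as $t\to\infty$ we have $t^{-1}\to 0^{+}$, so $\eta^{-1}(t^{-1})\to 0$ and $\eta'(t)\to\infty$. Thus putting $\eta'(0)=0$ extends $\eta'$ to a homeomorphism of $\mathbb{R}^{+}$. I would also note that $f^{-1}\colon f(X)\to X$ is an embedding, being the inverse homeomorphism of the embedding $f$, so the statement makes sense.

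For the main step, fix a triple $a',b',x'$ of points of $f(X)$ and $t>0$ with $\rho(x',a')\leqslant t\,\rho(x',b')$, and write $x=f^{-1}(x')$, $a=f^{-1}(a')$, $b=f^{-1}(b')$; the goal is $d(x,a)\leqslant\eta'(t)\,d(x,b)$. If $d(x,b)=0$ then $x=b$, hence $x'=b'$, hence $\rho(x',a')=0$, hence $a=x$ and the inequality is trivial. Assume $d(x,b)>0$ and suppose, towards a contradiction, that $d(x,a)>\eta'(t)\,d(x,b)$; then $d(x,a)>0$ and $s:=d(x,b)/d(x,a)<\eta^{-1}(t^{-1})$. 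Applying \eqref{e0} to the triple obtained from $a,b,x$ by interchanging $a$ and $b$ — from $d(x,b)\leqslant s\,d(x,a)$ — gives $\rho(x',b')\leqslant\eta(s)\,\rho(x',a')$, and since $\eta$ is increasing, $\eta(s)<\eta(\eta^{-1}(t^{-1}))=t^{-1}$, so $\rho(x',b')<t^{-1}\rho(x',a')$. Here $\rho(x',a')>0$, for $\rho(x',a')=0$ would force $\rho(x',b')=0$, i.e. $x=b$, contradicting $d(x,b)>0$. Hence $\rho(x',a')>t\,\rho(x',b')$, contradicting the hypothesis. Therefore $d(x,a)\leqslant\eta'(t)\,d(x,b)$, which is exactly the $\eta'$-quasisymmetry of $f^{-1}$.

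I expect the only delicate points to be the bookkeeping of the degenerate cases in which $d(x,b)$ or $\rho(x',a')$ vanishes — this is where the semimetric (rather than metric) setting must be handled with a little care, using only axioms (i) and (ii) — and the verification that $\eta'$ behaves correctly at the endpoint $0$; the algebraic heart of the argument is merely the monotonicity of $\eta$ together with the substitution $s=d(x,b)/d(x,a)$.
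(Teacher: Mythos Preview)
Your proof is correct and follows essentially the same contradiction argument as the paper: assume $\rho(x',a')\leqslant t\rho(x',b')$ but $d(x,a)>\eta'(t)d(x,b)$, rewrite the latter as $d(x,b)<\eta^{-1}(t^{-1})d(x,a)$, and apply \eqref{e0} to reach a contradiction. You are simply more thorough than the paper in verifying that $\eta'$ is a homeomorphism of $\mathbb{R}^{+}$ and in handling the degenerate cases where $d(x,b)$ or $\rho(x',a')$ vanish, both of which the paper's proof leaves implicit.
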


\begin{proof}
Let $a_1, b_1,x_1 \in Y$ and let $a=f^{-1}(a_1)$, $b=f^{-1}(b_1)$, and $x=f^{-1}(x_1)$. Let us prove the proposition by contradiction. Assume that
$$
\rho(x_1,a_1)\leqslant t\rho(x_1,b_1) \, \text{ but } \,  d(x,a)> \eta'(t)d(x,b).
$$
Then
$d(x,b)< \eta^{-1}(\frac{1}{t})\rho(x,a)$. Using~(\ref{e0}) we get $\rho(x_1,b_1)< \frac{1}{t}\rho(x_1,a_1)$, which contradicts our assumption.
\end{proof}

\begin{theorem}\label{t2.2}
Let $(X,d)$ and $(Y,\rho)$ be $b$-metric spaces with the coefficients $K_1$ and $K_2$, respectively. And let $f\colon X\to Y$ be an $\eta$-quasisymmetry. Then $f$ maps bounded subpaces to bounded subspaces. Moreover, if $A\subseteq B\subseteq X$, $0< \diam A,  \diam B <\infty$,  then $\diam f(B)$ is finite and the double inequality
\begin{equation}\label{e4}
  \frac{1}{2K_2\eta\left(\frac{\diam B}{\diam A}\right)}\leqslant
  \frac{\diam f(A)}{\diam f(B)}\leqslant
  \eta\left( \frac{2K_1\diam A}{\diam B}\right)
\end{equation}
holds.
\end{theorem}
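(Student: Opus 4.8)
The plan is to prove the three claims together, following the classical scheme behind Proposition~\ref{p1.2} (cf.~\cite[Proposition 10.8]{H01}) while keeping careful track of where $K_1$ and $K_2$ enter. I would begin with the boundedness claim, since finiteness of $\diam f(B)$ is what lets one close the main estimate. Fix two distinct points $a_1,a_2\in A$; this is possible because $\diam A>0$, and then $d(a_1,a_2)>0$. For every $y\in B$ one has $d(a_1,y)\leqslant\diam B=\frac{\diam B}{d(a_1,a_2)}\,d(a_1,a_2)$, so~\eqref{e0} applied to the triple $a_1,y,a_2$ gives $\rho(f(a_1),f(y))\leqslant\eta\!\left(\frac{\diam B}{d(a_1,a_2)}\right)\rho(f(a_1),f(a_2))=:M<\infty$. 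Hence, for any $y_1,y_2\in B$, the relaxed triangle inequality in $Y$ yields $\rho(f(y_1),f(y_2))\leqslant K_2\bigl(\rho(f(y_1),f(a_1))+\rho(f(a_1),f(y_2))\bigr)\leqslant 2K_2M$, so $\diam f(B)\leqslant 2K_2M<\infty$. The same computation run for an arbitrary bounded $S\subseteq X$ in place of $B$ (with $\diam S=0$ trivial) shows that $f$ maps bounded subspaces to bounded subspaces.

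Next the right-hand inequality of~\eqref{e4}. Fix arbitrary $x,a\in A$ and $\varepsilon\in(0,\diam B)$, and choose $b_1,b_2\in B$ with $d(b_1,b_2)>\diam B-\varepsilon$. The relaxed triangle inequality in $X$ gives $\diam B-\varepsilon<d(b_1,b_2)\leqslant K_1\bigl(d(x,b_1)+d(x,b_2)\bigr)$, so one of these two points, call it $b$, satisfies $d(x,b)>\frac{\diam B-\varepsilon}{2K_1}>0$. Since $d(x,a)\leqslant\diam A$, we get $d(x,a)<\frac{2K_1\diam A}{\diam B-\varepsilon}\,d(x,b)$, and then~\eqref{e0} together with $\rho(f(x),f(b))\leqslant\diam f(B)$ gives $\rho(f(x),f(a))\leqslant\eta\!\left(\frac{2K_1\diam A}{\diam B-\varepsilon}\right)\diam f(B)$ --- the step where finiteness of $\diam f(B)$ is used. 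Taking the supremum over $x,a\in A$ and letting $\varepsilon\to0^+$, continuity of $\eta$ yields $\diam f(A)\leqslant\eta\!\left(\frac{2K_1\diam A}{\diam B}\right)\diam f(B)$.

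For the left-hand inequality I would use the duality of Proposition~\ref{p2.1}: $f^{-1}\colon f(X)\to X$ is $\eta'$-quasisymmetric with $\eta'(t)=1/\eta^{-1}(t^{-1})$, where $f(X)$ carries the constant $K_2$ and $X$ carries $K_1$. Note $0<\diam f(A)\leqslant\diam f(B)<\infty$, the first inequality because $A$ has two distinct points whose images under the injection $f$ are distinct, hence at positive distance. Applying the right-hand inequality already proved --- whose argument is valid for any $\eta$-quasisymmetry between b-metric spaces --- to $f^{-1}$ and the sets $f(A)\subseteq f(B)\subseteq f(X)$ gives
\[
\diam A=\diam f^{-1}(f(A))\leqslant\eta'\!\left(\tfrac{2K_2\diam f(A)}{\diam f(B)}\right)\diam f^{-1}(f(B))=\frac{\diam B}{\eta^{-1}\!\left(\tfrac{\diam f(B)}{2K_2\diam f(A)}\right)},
\]
and rearranging, then applying the increasing homeomorphism $\eta$ to both sides, produces $\frac{\diam f(B)}{2K_2\diam f(A)}\leqslant\eta\!\left(\frac{\diam B}{\diam A}\right)$, i.e. the left-hand inequality of~\eqref{e4}. (Alternatively, the two-point estimate used above for boundedness gives this directly, upon choosing $a_1,a_2\in A$ with $d(a_1,a_2)$ arbitrarily close to $\diam A$.)

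No step here is deep. The two things requiring care are that finiteness of $\diam f(B)$ must be obtained separately --- the quasisymmetry condition compares only ratios, so it is not automatic --- and that one must consistently attach $K_1$ to whatever space plays the role of the domain in~\eqref{e0} and in Proposition~\ref{p2.1}, and $K_2$ to the target; this bookkeeping is exactly what turns the classical constants $2$ into $2K_1$ and $2K_2$.
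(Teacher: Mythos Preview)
Your proof is correct and follows essentially the same scheme as the paper's: boundedness via a pivot point and the $K_2$-triangle inequality in $Y$, the right-hand estimate via the $K_1$-triangle inequality in $X$ to locate a point $b$ far from $x$, and the left-hand estimate by applying the right-hand one to $f^{-1}$ through Proposition~\ref{p2.1}. The only differences are cosmetic (you use $\varepsilon$-arguments where the paper uses sequences, and you pivot at $x$ rather than $a$), and your parenthetical alternative for the left-hand inequality---reusing the boundedness computation with $d(a_1,a_2)\to\diam A$---is a nice direct shortcut that the paper does not record.
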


\begin{proof}
Let $(b_n)$ and $(b'_n)$ be sequences such that
$$
\frac{1}{2}\diam B \leqslant d(b_n,b'_n)\to \diam B, \text{ as } n \to \infty.
$$
For every $x\in B$ we have
$$
d(x,b_1)\leqslant \diam B \leqslant 2d(b_1,b'_1)
$$
by~(\ref{e0}) implying
$$
\rho(f(x),f(b_1))\leqslant \eta(2)\rho(f(b_1),f(b'_1)).
$$
In order to see that $\diam f(B)<\infty$ for any $x,y \in B$ consider the inequalities
$$
\rho(f(x),f(y)) \leqslant K_2(\rho(f(x),f(b_1))+ \rho(f(y),f(b_1)))
$$
$$
\leqslant  K_2(\eta(2)\rho(f(b_1),f(b'_1)) + \eta(2)\rho(f(b_1),f(b'_1))) = 2K_2\eta(2)\rho(f(b_1),f(b'_1)).
$$

Let $x, a\in A$. To prove inequality~(\ref{e4}) 
consider the evident inequality
$$
d(a,x)\leqslant \frac{d(x,a)}{d(b_n,a)}d(a,b_n)
$$
which by~(\ref{e0}) implies
\begin{equation}\label{e13}
\rho(f(x),f(a))\leqslant \eta \left( \frac{d(x,a)}{d(b_n,a)} \right)\rho(f(b_n),f(a)).
\end{equation}

Without loss of generality (if needed swapping $b_n$ and $b'_n$) we may assume that
\begin{equation}\label{e56}
d(b'_n,a)\leqslant d(b_n,a).
\end{equation}
Using the triangle inequality
$$
d(b_n, b'_n)\leqslant K_1(d(b_n, a)+d(a, b'_n))
$$
and~(\ref{e56}) we get
$$
d(b_n, b'_n)\leqslant 2 K_1d(b_n, a).
$$
Using the last inequality and the relations $d(x,a)\leqslant \diam A$, $A\subseteq B$, from~(\ref{e13}) we have
$$
\rho(f(x),f(a))\leqslant \eta \left( \frac{2K_1\diam A}{d(b_n, b'_n)} \right)\diam f(B).
$$
Since $ d(b_n,b'_n)\to \diam B$ we have the right inequality in~(\ref{e4}).

By Proposition~\ref{p2.1} $f^{-1}$ is $\eta'$-quasisymmetry.
Since $f(A)\subseteq f(B)\subseteq Y$, $0<\diam f(A),$ $\diam f(B)< \infty$, applying the right inequality in~(\ref{e4}) to $f^{-1}$  we have
$$
\frac{\diam A}{\diam B}\leqslant \eta'\left( \frac{2K_2\diam f(A)}{\diam f(B)}\right).
$$
From~(\ref{e81}) we have
$$
\frac{\diam A}{\diam B}\leqslant \left(\eta^{-1}\left( \frac{\diam f(B)}{2K_2\diam f(A)}\right)\right)^{-1}.
$$
Hence,
$$
\eta^{-1}\left( \frac{\diam f(B)}{2K_2\diam f(A)}\right) \leqslant \frac{\diam B}{\diam A}
$$
and, since $\eta$ is strictly increasing, we have
$$
\frac{\diam f(B)}{2K_2\diam f(A)} \leqslant \eta\left( \frac{\diam B}{\diam A}\right),
$$
which imply the first inequality in~(\ref{e4}). This completes the proof.
\end{proof}

The following corollary gives us a functional inequality imposed on an $\eta$-quasisymmetry between two b-metric spaces.

\begin{corollary}\label{c2.3}
Let $X$ and $Y$ be $b$-metric spaces with the coefficients $K_1$ and $K_2$, respectively, and let $f\colon X\to Y$ be an $\eta$-quasisymmetry. Then for every two subsets $A, B$ of $X$ with  $A\subseteq B$ and $0< \diam A,  \diam B <\infty$ the inequality
\begin{equation*}
2K_2\eta(2K_1t)\eta\left(\frac{1}{t}\right)\geqslant 1,
\end{equation*}
holds, where $t=\diam A / \diam B$.
\end{corollary}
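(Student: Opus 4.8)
The plan is to read the claimed functional inequality off directly from the double inequality~(\ref{e4}) of Theorem~\ref{t2.2}, eliminating the ratio $\diam f(A)/\diam f(B)$ between its two sides. First I would note that the hypotheses here — namely $A\subseteq B$ and $0<\diam A,\diam B<\infty$ — are precisely those of Theorem~\ref{t2.2}, so that theorem applies; in particular it gives $\diam f(B)<\infty$, and $\diam f(A)>0$ because $\diam A>0$ forces $A$ to contain two distinct points, whose images under the injective embedding $f$ are again distinct. Hence $\diam f(A)/\diam f(B)$ is a well-defined strictly positive real number.

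Next I would introduce $t=\diam A/\diam B$, so that $\diam B/\diam A=1/t$, and rewrite~(\ref{e4}) as
$$
\frac{1}{2K_2\,\eta\!\left(\tfrac{1}{t}\right)}\leqslant\frac{\diam f(A)}{\diam f(B)}\leqslant\eta\!\left(2K_1 t\right).
$$
Discarding the middle term yields $\dfrac{1}{2K_2\,\eta(1/t)}\leqslant\eta(2K_1 t)$, and multiplying both sides by the positive quantity $2K_2\,\eta(1/t)$ (positive since $\eta$ maps $(0,\infty)$ into $(0,\infty)$ and $K_2\geqslant 1$) gives exactly $2K_2\,\eta(2K_1 t)\,\eta(1/t)\geqslant 1$, which is the assertion.

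There is essentially no obstacle: the corollary is a purely formal rearrangement of Theorem~\ref{t2.2}. The only points worth spelling out are that both endpoints of~(\ref{e4}) are finite and strictly positive, so that chaining the inequalities and clearing denominators are legitimate, and that the arguments $1/t$ and $2K_1 t$ of $\eta$ indeed belong to the domain $\mathbb{R}^+$, which is immediate from $t=\diam A/\diam B\in(0,\infty)$ and $K_1\geqslant 1$.
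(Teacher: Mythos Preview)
Your proposal is correct and is precisely the intended derivation: the paper states Corollary~\ref{c2.3} without proof immediately after Theorem~\ref{t2.2}, so it is meant to follow exactly by chaining the two sides of~(\ref{e4}) and clearing the denominator, as you do. Your remarks on positivity and finiteness of the quantities involved are appropriate justifications for the algebraic manipulations.
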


Finally, we get Proposition 10.8 from~\cite{H01} as a following corollary.
\begin{corollary}\label{c2.4}
If $X$ and $Y$ are metric spaces, then the double inequality~(\ref{e4}) holds with $K_1=K_2=1$.
\end{corollary}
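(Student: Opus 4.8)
The plan is to obtain this as an immediate specialization of Theorem~\ref{t2.2}. As noted in the introduction, every metric space is a b-metric space with coefficient $K$ equal to any value $\geqslant 1$; in particular, a metric space $(X,d)$ is a b-metric space with coefficient $K=1$, since the ordinary triangle inequality $d(x,y)\leqslant d(x,z)+d(z,y)$ is exactly the b-metric inequality for $K=1$. Hence, given metric spaces $X$ and $Y$ and an $\eta$-quasisymmetry $f\colon X\to Y$, we may regard $X$ and $Y$ as b-metric spaces with coefficients $K_1=K_2=1$ and invoke Theorem~\ref{t2.2}.

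First I would check that the hypotheses transfer: Theorem~\ref{t2.2} requires $A\subseteq B\subseteq X$ with $0<\diam A,\diam B<\infty$, and since $A\subseteq B$ forces $\diam A\leqslant\diam B$, this is equivalent to the condition $\diam A>0$, $\diam B<\infty$ appearing in Proposition~\ref{p1.2}. Then I would simply substitute $K_1=K_2=1$ into~(\ref{e4}): the factor $2K_2$ in the left-hand denominator becomes $2$, the argument $\frac{2K_1\diam A}{\diam B}$ of $\eta$ on the right becomes $\frac{2\diam A}{\diam B}$, and the middle term $\frac{\diam f(A)}{\diam f(B)}$ is untouched, so~(\ref{e4}) reduces verbatim to~(\ref{e11}). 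The accompanying assertions of Theorem~\ref{t2.2}, namely that $f$ sends bounded subspaces to bounded subspaces and that $\diam f(B)<\infty$, likewise persist, so the full content of Proposition~\ref{p1.2} (equivalently, Proposition~10.8 of~\cite{H01} and the original inequality of~\cite{TV80}) is recovered.

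I do not anticipate any real obstacle here: the entire argument is a verification that the constants collapse correctly when $K_1=K_2=1$ and that the boundedness hypotheses coincide, both of which are routine. The only point worth stating explicitly is the observation that a metric space is precisely a b-metric space with coefficient $1$, which licenses the application of Theorem~\ref{t2.2} in the first place.
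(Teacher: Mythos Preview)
Your proposal is correct and matches the paper's treatment exactly: the paper gives no separate proof for Corollary~\ref{c2.4}, presenting it as an immediate specialization of Theorem~\ref{t2.2} with $K_1=K_2=1$, which is precisely what you do.
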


\section{The property to preserve b-metricity}

In in the following proposition we describe $\eta$-quasisymmetries $f$ for which the image $f(X)$ of a b-metric space $X$ with the coefficient $K_1$ is again b-metric space with another coefficient $K_2$.

\begin{theorem}\label{t3.1}
Let $(X,d)$ be a b-metric space with the coefficient $K_1$, $(Y,\rho)$ be a semimetric space, and let $f\colon X\to Y$ be a surjective $\eta$-quasisymmetry. If  there exists $K_2\geqslant 1$ such that for every $t_1, t_2\in \RR^+\setminus\{0\}$
\begin{equation}\label{c27}
\left( 1 \leqslant K_1\left(\frac{1}{t_1} + \frac{1}{t_2}\right) \right)
\Rightarrow
\left( 1 \leqslant K_2\left(\frac{1}{\eta(t_1)}+\frac{1}{\eta(t_2)}\right) \right),
\end{equation}
then $\rho$ is a b-metric with the coefficient $K_2$.
\end{theorem}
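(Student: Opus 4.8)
The plan is to verify the only b-metric axiom that is not automatic. Since $(Y,\rho)$ is given to be a semimetric space, it already satisfies symmetry and the identity of indiscernibles, so it remains to establish the relaxed triangle inequality $\rho(u,w)\leqslant K_2\bigl(\rho(u,v)+\rho(v,w)\bigr)$ for all $u,v,w\in Y$. Using that $f$ is surjective, I would write $u=f(x)$, $v=f(b)$, $w=f(a)$ for suitable $x,a,b\in X$. If two of $u,v,w$ coincide the inequality is immediate since $K_2\geqslant 1$, so I may assume they are pairwise distinct; then the chosen preimages $x,a,b$ are pairwise distinct as well, and hence $d(x,a),d(x,b),d(a,b)$ and (being distances in a semimetric space between distinct points) $\rho(f(x),f(a)),\rho(f(x),f(b)),\rho(f(a),f(b))$ are all positive.

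The crucial move is the substitution
$$
t_1=\frac{d(x,a)}{d(x,b)},\qquad t_2=\frac{d(x,a)}{d(a,b)},
$$
both of which lie in $\RR^{+}\setminus\{0\}$. With this choice $\dfrac{1}{t_1}+\dfrac{1}{t_2}=\dfrac{d(x,b)+d(a,b)}{d(x,a)}$, so the $K_1$-inequality $d(x,a)\leqslant K_1\bigl(d(x,b)+d(b,a)\bigr)$ in $X$ says exactly that $1\leqslant K_1\bigl(\tfrac{1}{t_1}+\tfrac{1}{t_2}\bigr)$. Therefore the hypothesis~(\ref{c27}) applies to $t_1,t_2$ and yields $1\leqslant K_2\bigl(\tfrac{1}{\eta(t_1)}+\tfrac{1}{\eta(t_2)}\bigr)$.

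It remains to transport this inequality to $Y$. Applying the quasisymmetry condition~(\ref{e0}) with the centre $x$ (using $d(x,a)=t_1 d(x,b)$) yields $\rho(f(x),f(a))\leqslant \eta(t_1)\rho(f(x),f(b))$, and applying it with the centre $a$ (using $d(a,x)=t_2 d(a,b)$ together with the symmetry of $d$) yields $\rho(f(x),f(a))\leqslant \eta(t_2)\rho(f(a),f(b))$. Since $\eta$ is a homeomorphism of $\RR^{+}$ it is strictly increasing with $\eta(0)=0$ and $\eta(t)>0$ for $t>0$, so these rewrite as $\rho(f(x),f(b))\geqslant \rho(f(x),f(a))/\eta(t_1)$ and $\rho(f(a),f(b))\geqslant \rho(f(x),f(a))/\eta(t_2)$. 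Adding them and invoking the consequence of~(\ref{c27}),
$$
\rho(f(x),f(b))+\rho(f(b),f(a))\geqslant \rho(f(x),f(a))\left(\frac{1}{\eta(t_1)}+\frac{1}{\eta(t_2)}\right)\geqslant \frac{\rho(f(x),f(a))}{K_2},
$$
which is precisely the required relaxed triangle inequality for $u,v,w$.

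I do not anticipate a genuine obstacle: the whole argument hinges on spotting the substitution $t_1,t_2$ for which the premise of~(\ref{c27}) is the $K_1$-inequality of $X$, while the conclusion of~(\ref{c27}) matches exactly what two applications of $\eta$-quasisymmetry --- centred at the two endpoints $x$ and $a$ --- produce. The only places calling for slight care are the degenerate configurations, handled at the outset, and the elementary facts $\eta(0)=0$ and $\eta>0$ on $(0,\infty)$ that legitimize dividing by $\eta(t_1)$ and $\eta(t_2)$.
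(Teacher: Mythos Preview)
Your proof is correct and is essentially the same argument as the paper's: with the relabelling $(x,a,b)\leftrightarrow(x,y,z)$, your substitutions $t_1=d(x,a)/d(x,b)$, $t_2=d(x,a)/d(a,b)$ match the paper's $t_1=d(x,y)/d(x,z)$, $t_2=d(x,y)/d(z,y)$, and both apply the quasisymmetry condition once centred at each endpoint of the long side before invoking~(\ref{c27}). Your explicit treatment of the degenerate case and of the positivity of $\eta$ is a little more careful than the paper's, but the method is identical.
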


\begin{proof}
Let $x',y',z' \in Y$ be different points and let $x=f^{-1}(x'), y=f^{-1}(y'), z=f^{-1}(z')$. Hence, $d(x,y)\leqslant K_1(d(x,z)+d(z,y))$ and
\begin{equation*}
1\leqslant K_1\left(\frac{d(x,z)}{d(x,y)}+\frac{d(z,y)}{d(x,y)}\right).
\end{equation*}
Set
$$
\frac{d(x,y)}{d(x,z)}= t_1, \quad \frac{d(x,y)}{d(z,y)}= t_2.
$$
Hence,
\begin{equation}\label{e29}
1\leqslant K_1 \left(\frac{1}{t_1}+\frac{1}{t_2}\right).
\end{equation}
By~(\ref{e0}) we have
$$
\rho(f(x),f(y))\leqslant \eta(t_1) \rho(f(x),f(z)), \quad
\rho(f(x),f(y))\leqslant \eta(t_2) \rho(f(z),f(y))
$$
or equivalently,
\begin{equation}\label{e210}
\rho(x',y')\leqslant \eta(t_1) \rho(x',z'), \quad
\rho(x',y')\leqslant \eta(t_2) \rho(z',y').
\end{equation}

By~(\ref{e29}),~(\ref{c27}) and ~(\ref{e210}) we have
$$
1\leqslant K_2 \left(\frac{1}{\eta(t_1)}+ \frac{1}{\eta(t_2)}\right)
\leqslant K_2 \left(\frac{\rho(x',z')}{\rho(x',y')}+ \frac{\rho(z',y')}{\rho(x',y')}\right)  
$$
$$
\leqslant\frac{1}{\rho(x',y')} K_2(\rho(x',z')+\rho(z',y')).
$$
The inequality $\rho(x',y')\leqslant K_2(\rho(x',z')+\rho(z',y'))$ follows.
\end{proof}

\begin{corollary}\label{c3.2}
Let $(X,d)$ be a b-metric space with the coefficient $K_1$, $(Y,\rho)$ be a semimetric space and let $f\colon X\to Y$ be a surjective $\eta$-quasisymmetry such that:
\begin{itemize}
  \item[(i)] $\eta(u)\eta(v)\leqslant \eta(uv)$;
  \item[(ii)] $\eta$ is subadditive, i.e. $\eta(u+v)\leqslant \eta(u)+\eta(v)$;
  \item[(iii)] $\eta(K_1t)\leqslant K_2 \eta(t)$ for some $K_2\geqslant 1$.
\end{itemize}
Then $Y$ is a b-metric space with the coefficient $K_2$.
\end{corollary}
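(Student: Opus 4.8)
The plan is to deduce Corollary~\ref{c3.2} directly from Theorem~\ref{t3.1} by checking that hypotheses (i)--(iii) force the implication~(\ref{c27}). So I would fix $t_1, t_2 \in \RR^+\setminus\{0\}$ and assume the left-hand side of~(\ref{c27}), namely $1 \leqslant K_1(1/t_1 + 1/t_2)$. Rewriting this as $t_1 t_2 \leqslant K_1(t_1 + t_2)$ puts it in a form to which the multiplicativity-type bound (i) and the dilation bound (iii) can be applied.

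The core computation is then a short monotone chain. First, by (i), $\eta(t_1)\eta(t_2) \leqslant \eta(t_1 t_2)$. Since $\eta$ is a homeomorphism of $\RR^+$ fixing $0$, it is strictly increasing, so from $t_1 t_2 \leqslant K_1(t_1 + t_2)$ we get $\eta(t_1 t_2) \leqslant \eta\!\left(K_1(t_1+t_2)\right)$. Applying (iii) with the argument $t_1 + t_2$ gives $\eta\!\left(K_1(t_1+t_2)\right) \leqslant K_2\,\eta(t_1+t_2)$, and subadditivity (ii) yields $\eta(t_1+t_2) \leqslant \eta(t_1) + \eta(t_2)$. Concatenating these four inequalities produces
\[
\eta(t_1)\eta(t_2) \leqslant K_2\bigl(\eta(t_1) + \eta(t_2)\bigr).
\]
Because $\eta(t_1), \eta(t_2) > 0$, I would divide through by $\eta(t_1)\eta(t_2)$ to obtain exactly the right-hand side of~(\ref{c27}), i.e. $1 \leqslant K_2\bigl(1/\eta(t_1) + 1/\eta(t_2)\bigr)$.

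Having verified~(\ref{c27}) for the given $K_2$, I would simply invoke Theorem~\ref{t3.1} to conclude that $\rho$ is a b-metric with coefficient $K_2$, hence $Y$ is a b-metric space with coefficient $K_2$. I do not anticipate a genuine obstacle here; the only points requiring a word of care are that $\eta$ being a homeomorphism of $\RR^+$ guarantees strict monotonicity (so the passage $t_1t_2 \leqslant K_1(t_1+t_2) \Rightarrow \eta(t_1t_2)\leqslant\eta(K_1(t_1+t_2))$ is legitimate) and that $\eta(t) > 0$ for $t > 0$ (so the final division is valid and the reciprocals in~(\ref{c27}) are well defined).
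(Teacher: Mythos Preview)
Your proof is correct and follows essentially the same approach as the paper: both rewrite the hypothesis as $t_1t_2\leqslant K_1(t_1+t_2)$, use monotonicity of $\eta$ together with (i), and then apply (ii) and (iii) to reach $\eta(t_1)\eta(t_2)\leqslant K_2(\eta(t_1)+\eta(t_2))$, which is equivalent to the right side of~(\ref{c27}). The only cosmetic difference is the order in which (ii) and (iii) are invoked---the paper first uses subadditivity on $\eta(K_1(t_1+t_2))$ and then (iii) on each summand, whereas you apply (iii) to $\eta(K_1(t_1+t_2))$ and then subadditivity---but this is immaterial.
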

\begin{proof}
The first inequality in~(\ref{c27}) can be rewritten as
$t_1t_2 \leqslant K_1(t_1 + t_2)$.
Since $\eta$ is strictly increasing we have
$\eta(t_1 t_2) \leqslant \eta(K_1(t_1 + t_2))$.
Using (i) we obtain
$\eta(t_1)\eta(t_2) \leqslant \eta(K_1(t_1 + t_2))$.
Condition (ii) gives
$\eta(t_1)\eta(t_2) \leqslant \eta(K_1t_1) + \eta(K_1 t_2)$.
By (iii) we have $\eta(t_1)\eta(t_2) \leqslant K_2\eta(t_1) + K_2\eta(t_2)$ which is equivalent to the second inequality in~(\ref{c27}).
\end{proof}

The following assertion is well-known, see, e.g., Section 2.12 in~\cite{HLP52}.
\begin{lemma}\label{l3.1}
If $0<\alpha\leqslant 1$, then for $u,v\geqslant 0$ the inequality
$(u+v)^{\alpha}\leqslant u^{\alpha}+v^{\alpha}$
holds.
\end{lemma}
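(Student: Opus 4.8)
The plan is to reduce to the nontrivial case $u+v>0$ (the case $u=v=0$ being immediate) and then normalize. First I would set $s=u/(u+v)$ and $t=v/(u+v)$, so that $s,t\in[0,1]$ and $s+t=1$. The key elementary observation is that for a number $c\in[0,1]$ and an exponent $\alpha\in(0,1]$ one has $c^{\alpha}\geqslant c$: indeed $c^{\alpha}/c=c^{\alpha-1}\geqslant 1$ whenever $c\leqslant 1$ and $\alpha-1\leqslant 0$ (and the inequality is trivial at $c=0$). Applying this to $s$ and $t$ yields $s^{\alpha}+t^{\alpha}\geqslant s+t=1$.

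Then I would multiply through by $(u+v)^{\alpha}>0$ and use $(u+v)^{\alpha}s^{\alpha}=u^{\alpha}$ and $(u+v)^{\alpha}t^{\alpha}=v^{\alpha}$ to obtain $u^{\alpha}+v^{\alpha}\geqslant(u+v)^{\alpha}$, which is the claim.

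There is no genuine obstacle here; the only point that needs a word of care is the boundary behaviour when one of $u,v$ vanishes, which is covered either by the convention $0^{\alpha}=0$ or by isolating it as a trivial case. If one prefers to avoid the normalization, an equivalent route is to fix $v\geqslant 0$ and set $g(u)=u^{\alpha}+v^{\alpha}-(u+v)^{\alpha}$; then $g(0)=0$ and $g'(u)=\alpha\bigl(u^{\alpha-1}-(u+v)^{\alpha-1}\bigr)\geqslant 0$ because $\tau\mapsto\tau^{\alpha-1}$ is nonincreasing for $\alpha\leqslant 1$ and $u\leqslant u+v$, so $g(u)\geqslant 0$ for all $u\geqslant 0$. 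Either argument is short enough that, given the lemma is labelled well-known, one could also simply cite Section 2.12 of~\cite{HLP52}.
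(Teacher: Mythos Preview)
Your proof is correct; both the normalization argument and the derivative argument you sketch establish the inequality cleanly. Note, however, that the paper does not actually prove this lemma at all --- it simply labels it well-known and cites Section~2.12 of \cite{HLP52}, which is precisely the option you mention at the end of your proposal. So you have supplied strictly more than the paper does.
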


\begin{example}\label{e3.4}
Let in Corollary~\ref{c3.2} $K_1=1$ and let $\eta(t)=t^{\alpha}$, $0<\alpha\leqslant 1$. Then $Y$ is a metric space. Clearly, $\eta$ is strictly increasing, condition (i) holds and (iii) holds with $K_2=1$. Lemma~\ref{l3.1} implies the subadditivity of $\eta$.
\end{example}

\begin{example}\label{e3.5}
Let $(X,d)$ be a $b$-metric space with the coefficient $K_1$, $Y$ be a semimetric space, $f\colon X\to Y$ be $\eta$-quasisymmetric with $\eta(t)=Ct^{\alpha}$, where $C>0$, $0<\alpha \leqslant 1$,  and let the inequality $K_2\geqslant C K_1^{\alpha}$ hold.  Then $Y$ is a $b$-metric space with the coefficient $K_2$.

Indeed, let the first inequality in~(\ref{c27}) hold. Then
$t_1^{\alpha}t_2^{\alpha} \leqslant K_1^{\alpha}(t_1 + t_2)^{\alpha}$. Multiplying both parts of this inequality by $C^2$, using Lemma~\ref{l3.1} and the inequality  $K_2\geqslant C K_1^{\alpha}$ we get
$$
\eta(t_1)\eta(t_2)=C^2t_1^{\alpha}t_2^{\alpha} \leqslant C^2K_1^{\alpha}(t_1 + t_2)^{\alpha} 
$$
$$
\leqslant CK_2(t_1^{\alpha}+t_2^{\alpha})=K_2(\eta(t_1)+\eta(t_2)).
$$
Dividing both parts on $\eta(t_1)\eta(t_2)$ we have the second inequality in~(\ref{c27}) which completes the proof.
\end{example}

The well-known concept of bi-Lipschitz mappings can be easily generalized to the case of semimetric spaces.
\begin{definition}\label{d3.6}
Let $(X,d)$, $(Y,\rho)$ be semimetric spaces. A function $f\colon X \to Y$ is called \emph{bi-Lipschitz} if there exists $L\geqslant 1$ such that the relation
\begin{equation}\label{e31}
\frac{1}{L} d(x,y)\leqslant \rho(f(x),f(y))\leqslant L d(x,y).
\end{equation}
holds for all $x,y \in X$.
\end{definition}

Let $d(x,a)\leqslant td(x,b)$. Using consecutively the right inequality in~(\ref{e31}) the previous inequality and the left inequality in~(\ref{e31}), we get
$$
\rho(f(x),f(a))\leqslant L d(x,a)\leqslant Ltd(x,b)\leqslant L^2t\rho(f(x),f(b)).
$$


This implies the following.
\begin{proposition}\label{p3.6}
For any two semimetric spaces every $L$-bi-Lipschitz embedding is an $\eta$-quasisymmetry with $\eta(t)=L^2t$. 
\end{proposition}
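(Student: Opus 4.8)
The plan is to verify Definition~\ref{d1.1} directly for an $L$-bi-Lipschitz embedding $f\colon X\to Y$ with the choice $\eta(t)=L^2t$. First I would note that $\eta(t)=L^2 t$ is indeed a homeomorphism of $\RR^+$ onto itself (it is a strictly increasing linear bijection fixing $0$), so it is an admissible control function in the sense of Definition~\ref{d1.1}. Also, a bi-Lipschitz embedding is by hypothesis an embedding, so the structural requirement on $f$ is met and only the quasisymmetry inequality~(\ref{e0}) needs checking.

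The core of the argument is exactly the three-step chain displayed just before the statement: given a triple $a,b,x\in X$ with $d(x,a)\leqslant t\,d(x,b)$, apply the right-hand inequality in~(\ref{e31}) to bound $\rho(f(x),f(a))\leqslant L\,d(x,a)$, then use the hypothesis $d(x,a)\leqslant t\,d(x,b)$ to get $\leqslant Lt\,d(x,b)$, and finally apply the left-hand inequality in~(\ref{e31}) in the form $d(x,b)\leqslant L\,\rho(f(x),f(b))$ to conclude $\rho(f(x),f(a))\leqslant L^2 t\,\rho(f(x),f(b))=\eta(t)\rho(f(x),f(b))$. Since this holds for every triple and every $t\in\RR^+$, the implication in~(\ref{e0}) is established and $f$ is an $\eta$-quasisymmetry with $\eta(t)=L^2 t$.

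Honestly there is no real obstacle here: the proposition is an immediate corollary of the computation already carried out in the text, and the only thing to be careful about is the bookkeeping that $\eta$ is a genuine self-homeomorphism of $\RR^+$ and that the displayed inequality is used in both of its forms (upper bound for the image distance of the pair $x,a$, lower bound for the pair $x,b$). Thus the proof is essentially a one-line reference back to the preceding display, and I would simply write it as such.

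\begin{proof}
The map $\eta\colon\RR^+\to\RR^+$, $\eta(t)=L^2 t$, is a homeomorphism. Since $f$ is a bi-Lipschitz embedding, it is an embedding, so it remains to verify~(\ref{e0}). Let $a,b,x\in X$ and $t\in\RR^+$ be such that $d(x,a)\leqslant t\,d(x,b)$. Using consecutively the right inequality in~(\ref{e31}), this assumption, and the left inequality in~(\ref{e31}) we obtain
$$
\rho(f(x),f(a))\leqslant L\,d(x,a)\leqslant Lt\,d(x,b)\leqslant L^2 t\,\rho(f(x),f(b))=\eta(t)\rho(f(x),f(b)).
$$
Hence $f$ is an $\eta$-quasisymmetry with $\eta(t)=L^2 t$.
\end{proof}
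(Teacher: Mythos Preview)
Your proof is correct and is essentially identical to the paper's own argument: the paper gives exactly the same three-step chain $\rho(f(x),f(a))\leqslant L\,d(x,a)\leqslant Lt\,d(x,b)\leqslant L^2 t\,\rho(f(x),f(b))$ in the display immediately preceding the proposition. You additionally spell out that $\eta(t)=L^2t$ is a self-homeomorphism of $\RR^+$, which the paper leaves implicit.
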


Example~\ref{e3.5} implies the following.
\begin{corollary}\label{c3.7}
Let $X$ be a $b$-metric space with the coefficient $K_1$, $Y$ be a semimetric space, $f\colon X\to Y$ be a $L$-bi-Lipschitz surjective mapping and let the inequality $K_2\geqslant K_1L^2$ holds.  Then $Y$ is a $b$-metric space with the coefficient $K_2$.
\end{corollary}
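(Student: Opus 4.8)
The plan is to obtain the statement as a direct specialization of Example~\ref{e3.5}, using Proposition~\ref{p3.6} to pass from the bi-Lipschitz hypothesis to an $\eta$-quasisymmetry with an explicit control function. First I would note that an $L$-bi-Lipschitz surjective mapping $f\colon X\to Y$ between semimetric spaces is automatically an embedding: the left-hand inequality in~(\ref{e31}) shows that $\rho(f(x),f(y))=0$ forces $d(x,y)=0$ and hence $x=y$, so $f$ is injective, while both inequalities in~(\ref{e31}) give the (Lipschitz, hence) continuity of $f$ and of $f^{-1}$. Thus Proposition~\ref{p3.6} applies and $f$ is an $\eta$-quasisymmetry with $\eta(t)=L^2t$, which is indeed a homeomorphism of $\mathbb{R}^{+}$ onto itself since $L\geqslant 1$.

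Next I would rewrite $\eta(t)=L^2t$ in the form $\eta(t)=Ct^{\alpha}$ with $C=L^2>0$ and $\alpha=1$, so that $0<\alpha\leqslant 1$ holds. The hypothesis $K_2\geqslant K_1L^2$ then reads exactly $K_2\geqslant CK_1^{\alpha}$, so all the assumptions of Example~\ref{e3.5} are in place: $(X,d)$ is a $b$-metric space with coefficient $K_1$, $Y$ is a semimetric space, $f$ is a surjective $\eta$-quasisymmetry with $\eta(t)=Ct^{\alpha}$, and $K_2\geqslant CK_1^{\alpha}$. Applying Example~\ref{e3.5} yields that $\rho$ is a $b$-metric with coefficient $K_2$, which is the desired conclusion.

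There is essentially no real obstacle here; the statement is a routine substitution of parameters. The only points worth verifying carefully are that $\eta(t)=L^2t$ genuinely satisfies Definition~\ref{d1.1} (immediate) and that surjectivity of $f$ is used to conclude that the whole space $Y$, and not merely $f(X)$, carries the $b$-metric structure — this surjectivity is already among the hypotheses of the corollary and is exactly what Example~\ref{e3.5} (through Theorem~\ref{t3.1}) requires. The degenerate configurations of coinciding points among the three points involved make the $b$-triangle inequality trivial, just as in the proof of Theorem~\ref{t3.1}.
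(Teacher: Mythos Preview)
Your proof is correct and matches the paper's own argument: the paper simply states that Example~\ref{e3.5} implies the corollary, and you have spelled out exactly this specialization, invoking Proposition~\ref{p3.6} to obtain $\eta(t)=L^2t=Ct^{\alpha}$ with $C=L^2$, $\alpha=1$, so that $K_2\geqslant K_1L^2=CK_1^{\alpha}$. Your additional checks (injectivity and continuity from the bi-Lipschitz condition, surjectivity ensuring the conclusion applies to all of $Y$) are appropriate and do not depart from the intended route.
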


\section{Quasisymmetric mappings preserving additive metrics}

A metric $d$ on $X$ is \emph{additive}~\cite[p.~7]{DD09} if it satisfies the following strengthened version of the triangle inequality called the four-point inequality:
\begin{equation}\label{e61}
d(x,y) + d(z,u)\leqslant \max\{d(x,z) + d(y,u), d(x,u) + d(y,z)\}
\end{equation}
for all $x$, $y$, $z$, $u\in  X$. Equivalently, among the three sums $d(x,y)+d(z,u)$, $d(x,z)+d(y,u)$, $d(x,u) + d(y,z)$ the two largest sums are equal.
The well-known Buneman's criterion~\cite[Theorem 2]{Bu74} asserts that a finite metric space is additive if and only if it is a tree metric. Note that tree metrics play an important role in phylogenetics~\cite{SS03} and hierarchical clustering~\cite{AC11}.

Recall that an ultrametric is a metric for which the strong triangle inequality $d(x,y)\leqslant \max\{d(x,z),d(z,y)\}$ holds. The class of ultrametric spaces is contained in the class of additive metric spaces.


\begin{theorem}\label{t4.1}
Let $(X,d)$ be an additive metric space, $(Y,\rho)$ be a semimetric space and let $f\colon X\to Y$ be a surjective $\eta$-quasisymmetry. If for every $t_1, t_2, t_3, t_4, t_5\in \RR^+\setminus\{0\}$ the inequality
\begin{equation}\label{e51}
1+\frac{1}{t_1}\frac{1}{t_5}\leqslant \max\left\{\frac{1}{t_1}+\frac{1}{t_2}, \frac{1}{t_3}+\frac{1}{t_4}\right\}
\end{equation}
 implies
\begin{equation}\label{e52}
1+\eta\left(\frac{1}{t_1}\right)\eta\left(\frac{1}{t_5}\right)\leqslant \max\left\{\frac{1}{\eta(t_1)}+\frac{1}{\eta(t_2)}, \frac{1}{\eta(t_3)}+\frac{1}{\eta(t_4)}\right\},
\end{equation}
then $(Y,\rho)$ is also an additive metric space.
\end{theorem}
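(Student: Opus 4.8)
The plan is to imitate the proof of Theorem~\ref{t3.1}, replacing the triple of points by a quadruple. Pick four distinct points $x',y',z',u'\in Y$ and let $x=f^{-1}(x')$, $y=f^{-1}(y')$, $z=f^{-1}(z')$, $u=f^{-1}(u')$, which are well defined (surjectivity of $f$), distinct (injectivity of $f$), and have all six pairwise distances positive. Since the four-point inequality~(\ref{e61}) is symmetric in the pairing of the four points, it suffices to prove
$$
\rho(x',y')+\rho(z',u')\leqslant\max\{\rho(x',z')+\rho(y',u'),\ \rho(x',u')+\rho(y',z')\}
$$
for an arbitrary labelling $x',y',z',u'$; running this over all three pairings then shows that among the three sums the two largest coincide, i.e. that $\rho$ is additive. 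Starting from~(\ref{e61}) for $d$ and dividing by $d(x,y)>0$ turns the four-point inequality for $d$ into a relation among the normalized distances $d(\cdot,\cdot)/d(x,y)$, and the aim is to match it with the premise~(\ref{e51}).

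The crucial step is the substitution
$$
t_1=\frac{d(x,y)}{d(x,z)},\qquad t_2=\frac{d(x,y)}{d(y,u)},\qquad t_3=\frac{d(x,y)}{d(x,u)},\qquad t_4=\frac{d(x,y)}{d(y,z)},\qquad t_5=\frac{d(x,z)}{d(z,u)},
$$
all lying in $\RR^+\setminus\{0\}$. With this choice $d(x,z)/d(x,y)=1/t_1$, $d(y,u)/d(x,y)=1/t_2$, $d(x,u)/d(x,y)=1/t_3$, $d(y,z)/d(x,y)=1/t_4$, and the remaining normalized distance is $d(z,u)/d(x,y)=(1/t_1)(1/t_5)$; hence the divided four-point inequality for $d$ is exactly~(\ref{e51}), so the hypothesis yields~(\ref{e52}). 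Now translate~(\ref{e52}) back to $Y$ via~(\ref{e0}): from $d(x,y)\leqslant t_1d(x,z)$, $d(x,y)\leqslant t_2d(y,u)$, $d(x,y)\leqslant t_3d(x,u)$, $d(x,y)\leqslant t_4d(y,z)$ one obtains the four lower bounds $\rho(x',z')\geqslant\rho(x',y')/\eta(t_1)$, $\rho(y',u')\geqslant\rho(x',y')/\eta(t_2)$, $\rho(x',u')\geqslant\rho(x',y')/\eta(t_3)$, $\rho(y',z')\geqslant\rho(x',y')/\eta(t_4)$, while from $d(z,u)\leqslant(1/t_5)d(z,x)$ followed by $d(x,z)\leqslant(1/t_1)d(x,y)$ two successive applications of~(\ref{e0}) give the upper bound $\rho(z',u')\leqslant\eta(1/t_5)\eta(1/t_1)\rho(x',y')$. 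Substituting these estimates into~(\ref{e52}), after multiplying it through by $\rho(x',y')$, produces the required four-point inequality for $\rho$.

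It remains to verify that $\rho$ is a genuine metric, i.e. that it satisfies the triangle inequality (the other metric axioms hold since $\rho$ is a semimetric). For this I would degenerate the configuration to a triangle: specializing~(\ref{e51})--(\ref{e52}) to $t_3=t_1$, $t_4=t_2$ and letting $t_5\to\infty$ (so $\eta(1/t_5)\to\eta(0)=0$) gives the implication $\frac1{t_1}+\frac1{t_2}>1\Rightarrow\frac1{\eta(t_1)}+\frac1{\eta(t_2)}\geqslant1$, which extends by continuity of $\eta$ to the closed condition $\frac1{t_1}+\frac1{t_2}\geqslant1\Rightarrow\frac1{\eta(t_1)}+\frac1{\eta(t_2)}\geqslant1$. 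Combined with the triangle inequality in $X$ and~(\ref{e0}) exactly as in the proof of Theorem~\ref{t3.1}, this yields $\rho(x',y')\leqslant\rho(x',z')+\rho(z',y')$ for distinct $x',y',z'\in Y$.

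I expect the main obstacle to be discovering the substitution above, and in particular seeing that the ``opposite'' normalized distance $d(z,u)/d(x,y)$ must be split precisely as $(1/t_1)(1/t_5)$: this is the only splitting for which $\rho(z',u')$ can be bounded from above by chaining two quasisymmetry estimates routed through the common points $z$ and then $x$, which is exactly what makes the factor $\eta(1/t_1)\eta(1/t_5)$ on the left of~(\ref{e52}) the right quantity. A secondary, more technical point is that the hypothesis as stated ($t_5\in\RR^+\setminus\{0\}$) does not literally cover the degenerate case $d(z,u)=0$, so the triangle inequality needs the small limiting argument indicated above.
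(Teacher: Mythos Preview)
Your argument for the four-point inequality is identical to the paper's: the same pullback of four distinct points, the same substitution for $t_1,\ldots,t_5$ (including the key splitting $d(z,u)/d(x,y)=(1/t_1)(1/t_5)$ routed through $x$), the same six applications of~(\ref{e0}), and the same combination with~(\ref{e52}) after multiplying through by $\rho(x',y')$.

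The paper's proof stops there, having established~(\ref{e61}) only for four \emph{distinct} points $x',y',z',u'$, and declares the proof complete. You go further and separately verify the ordinary triangle inequality for $\rho$, which is needed since ``additive metric'' presupposes that $\rho$ is a metric, and the four-point inequality for distinct quadruples does not by itself give the triangle inequality (the latter is precisely the degenerate case $u'=z'$). Your limiting argument---specializing $t_3=t_1$, $t_4=t_2$, choosing $t_5$ large, and then passing to the boundary by continuity of $\eta$---correctly recovers condition~(\ref{c27}) with $K_1=K_2=1$, so Theorem~\ref{t3.1} applies and $\rho$ is a metric. This is a genuine addition to what the paper writes out.
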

\begin{proof}
Let $x',y',z',u' \in Y$ be different points and let $x=f^{-1}(x')$, $y=f^{-1}(y')$, $z=f^{-1}(z')$, $u=f^{-1}(u')$.  Dividing both parts of~(\ref{e61}) on $d(x,y)$ and using the equality
$$
\frac{d(z,u)}{d(x,y)}=\frac{d(z,u)}{d(
z,x)}\frac{d(z,x)}{d(x,y)}
$$
we have
\begin{equation}\label{e281}
1 \leqslant \max\left\{\frac{d(x,z)}{d(x,y)} + \frac{d(y,u)}{d(x,y)}, \frac{d(x,u)}{d(x,y)} + \frac{d(y,z)}{d(x,y)}\right\} - \frac{d(z,u)}{d(
z,x)}\frac{d(z,x)}{d(x,y)}.
\end{equation}
Set
$$
\frac{d(x,y)}{d(x,z)}= t_1, \quad \frac{d(x,y)}{d(y,u)}= t_2, \quad \frac{d(x,y)}{d(x,u)}= t_3, \quad \frac{d(x,y)}{d(y,z)}= t_4, \quad \frac{d(z,x)}{d(z,u)}= t_5.
$$
Clearly, ~(\ref{e281}) implies~(\ref{e51}).

By~(\ref{e0}) we have
$$
\rho(f(x),f(y))\leqslant \eta(t_1) \rho(f(x),f(z)), \quad
\rho(f(x),f(y))\leqslant \eta(t_2) \rho(f(y),f(u)),
$$
$$
\rho(f(x),f(y))\leqslant \eta(t_3) \rho(f(x),f(u)), \quad
\rho(f(x),f(y))\leqslant \eta(t_4) \rho(f(y),f(z)),
$$
$$
\rho(f(z),f(u))\leqslant \eta\left(\frac{1}{t_5}\right) \rho(f(z),f(x))
$$
$$
\rho(f(z),f(x))\leqslant \eta\left(\frac{1}{t_1}\right) \rho(f(x),f(y)).
$$
Hence,
$$
\max\left\{\frac{1}{\eta(t_1)} + \frac{1}{\eta(t_2)}, \frac{1}{\eta(t_3)} + \frac{1}{\eta(t_4)}\right\} - \eta\left(\frac{1}{t_1}\right)\eta\left(\frac{1}{t_5}\right)
$$
$$
\leqslant \max\left\{\frac{\rho(x',z')}{\rho(x',y')} + \frac{\rho(y',u')}{\rho(x',y')}, \frac{\rho(x',u')}{\rho(x',y')} + \frac{\rho(y',z')}{\rho(x',y')}\right\} - \frac{\rho(z',u')}{\rho(
z',x')}\frac{\rho(z',x')}{\rho(x',y')}.
$$

Using the last inequality and ~(\ref{e52})  we obtain inequality~(\ref{e61}) for  $x', y', z', u'$  which completes the proof.
\end{proof}
\begin{corollary}\label{c4.2}
If $\eta(t)=t$, then $\eta$-quasisymmetry preserves additive metrics.
\end{corollary}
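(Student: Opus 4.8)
The plan is to obtain this as an immediate consequence of Theorem~\ref{t4.1} by verifying that, for $\eta(t)=t$, the hypothesis of that theorem holds vacuously. First I would observe that the identity function $t\mapsto t$ is a homeomorphism of $\RR^{+}$ onto itself, so a surjective $\eta$-quasisymmetry with $\eta(t)=t$ is a legitimate instance of the setup of Theorem~\ref{t4.1}, and that theorem may be applied to it.

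Next I would substitute $\eta(t)=t$ into the two inequalities~(\ref{e51}) and~(\ref{e52}). Since then $\eta(1/t_1)=1/t_1$, $\eta(1/t_5)=1/t_5$ and $1/\eta(t_i)=1/t_i$ for $i=1,2,3,4$, the left-hand side of~(\ref{e52}) becomes $1+\frac{1}{t_1}\frac{1}{t_5}$ and its right-hand side becomes $\max\{\frac{1}{t_1}+\frac{1}{t_2},\ \frac{1}{t_3}+\frac{1}{t_4}\}$; that is, ~(\ref{e52}) reduces verbatim to ~(\ref{e51}). Consequently the implication ``(\ref{e51}) $\Rightarrow$ (\ref{e52})'' is trivially true for all $t_1,t_2,t_3,t_4,t_5\in\RR^{+}\setminus\{0\}$, and nothing has to be checked beyond this substitution.

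Applying Theorem~\ref{t4.1} with this $\eta$ then yields that $(Y,\rho)$ is an additive metric space, which is exactly the assertion of the corollary. I do not expect any genuine obstacle: the only points requiring a little care are keeping track of the reciprocals when plugging $\eta(t)=t$ into~(\ref{e52}) so that one really recovers~(\ref{e51}), and noting that the strict positivity of all the $t_i$ needed to form those ratios is guaranteed (as in the proof of Theorem~\ref{t4.1}) by the distinctness of the four points $x',y',z',u'$.
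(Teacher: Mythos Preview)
Your proposal is correct and matches the paper's approach: the paper states Corollary~\ref{c4.2} immediately after Theorem~\ref{t4.1} with no separate proof, treating it as the obvious specialization $\eta(t)=t$, which is exactly what you do. Your observation that~(\ref{e52}) reduces verbatim to~(\ref{e51}) under this substitution is precisely the intended reasoning.
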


\bigskip

CONTACT INFORMATION

\medskip
E.~Petrov\\Institute of Applied Mathematics and Mechanics of the NAS of Ukraine, Slovyansk\\eugeniy.petrov@gmail.com

\medskip
R.~Salimov\\ Institute of Mathematics of the NAS of Ukraine, Kiev\\ruslan.salimov1@gmail.com

\begin{thebibliography}{10}

\bibitem{BA56}
Beurling, A.,  Ahlfors, L.~V. (1956).
\newblock The boundary correspondence under quasiconformal mappings.
\newblock {\em Acta Math., 96}, 125--142.

\bibitem{TV80}
{Tukia}, P.,  {V\"ais\"al\"a}, J. (1980).
\newblock {Quasisymmetric embeddings of metric spaces}.
\newblock {\em {Ann. Acad. Sci. Fenn., Ser. A I, Math.}, 5}, 97--114.

\bibitem{AKT05}
{Aseev}, V.~V., {Kuzin}, D.~G., {Tetenov}, A.~V. (2005).
\newblock {Angles between sets and the gluing of quasisymmetric mappings in
  metric spaces}.
\newblock {\em {Russ. Math.}, 49}(10), 1--10.

\bibitem{T98}
Tyson, J. (1998).
\newblock {Quasiconformality and quasisymmetry in metric measure spaces}.
\newblock {\em {Ann. Acad. Sci. Fenn., Math.}, 23}(2), 525--548.

\bibitem{HL15}
{Huang}, X., {Liu}, J. (2015).
\newblock {Quasihyperbolic metric and quasisymmetric mappings in metric
  spaces}.
\newblock {\em {Trans. Am. Math. Soc.}, 367}(9), 6225--6246.

\bibitem{BM13}
{Bonk}, M., {Merenkov}, S. (2013).
\newblock {Quasisymmetric rigidity of square Sierpi\'nski carpets}.
\newblock {\em {Ann. Math. (2)}, 177}(2), 591--643.

\bibitem{YVZ18}
{Li}, Y., {Vuorinen}, M., {Zhou}, Q. (2018).
\newblock {Weakly quasisymmetric maps and uniform spaces}.
\newblock {\em {Comput. Methods Funct. Theory}, 18}(4), 689--715.

\bibitem{AB19}
Afanaseva, E.~S., Bilet, V.~V.  (2019).
\newblock Some properties of quasisymmetries in metric spaces.
\newblock {\em Ukr. Mat. Visn., 16}(1), 2--9.

\bibitem{Bl}
Blumenthal, L.~M. (1953).
\newblock {\em Theory and applications of distance geometry}.
\newblock Oxford, at the Clarendon Press.

\bibitem{C93}
Czerwik, S. (1993).
\newblock Contraction mappings in b-metric spaces.
\newblock {\em Acta Math. Univ. Ostrav., 1}(1), 5--11.

\bibitem{C98}
{Czerwik}, S. (1998).
\newblock {Nonlinear set-valued contraction mappings in \(b\)-metric spaces}.
\newblock {\em {Atti Semin. Mat. Fis. Univ. Modena}, 46}(2), 263--276.

\bibitem{KH10}
{Khamsi}, M.~A., {Hussain}, N. (2010).
\newblock {KKM mappings in metric type spaces}.
\newblock {\em {Nonlinear Anal., Theory Methods Appl., Ser. A, Theory
  Methods}, 73}(9), 3123--3129.

\bibitem{ABKM12}
{Aydi}, H., {Bota}, M.~F., {Karap{\i}nar}, E.,  {Mitrovi\'c}, S. (2012).
\newblock {A fixed point theorem for set-valued quasi-contractions in
  \(b\)-metric spaces}.
\newblock {\em {Fixed Point Theory Appl.}, 2012}(88), 1--8.

\bibitem{2ABKM12}
{Aydi}, H., {Bota}, M.~F., {Karap{\i}nar}, E., {Moradi}, S. (2012).
\newblock A common fixed point for weak $\varphi$-contractions on $b$-metric
  spaces.
\newblock {\em {Fixed Point Theory}, 13}(2), 1--10.

\bibitem{HSA14}
Agrawal, R.~P., Hussain, N., Saadati, R. (2014).
\newblock On the topology and wt-distance on metric type spaces.
\newblock {\em Fixed Point Theory Appl., 2014}(88), 1--14.

\bibitem{CP14}
Chifu, C., Petru\c{s}el, G. (2014)
\newblock Fixed points for multivalued contractions in $b$-metric spaces with
  applications to fractals.
\newblock {\em Taiwanese J. Math., 18}, 1365--1375.

\bibitem{KS14}
Kirk, W., Shahzad, N. (2014).
\newblock {\em Fixed point theory in distance spaces}.
\newblock Springer, Cham.

\bibitem{HS11}
Hussain, N., Shah, M.~H. (2011).
\newblock {KKM} mappings in cone-metric spaces.
\newblock {\em Comput. Math. Appl., 62}, 1677--1684.

\bibitem{AHS13}
Alghamdi, M.~A., Hussain, N., Salimi, P. (2013).
\newblock Fixed point and coupled fixed point theorems on b-metric-like spaces.
\newblock {\em J. Inequal. Appl, 2013}, 1--25.

\bibitem{S14}
Shukla, S. (2014).
\newblock Partial b-metric spaces and fixed point theorems.
\newblock {\em Mediterr. J. Math., 11}, 703--711.

\bibitem{MRPK13}
Mustafa, Z., Roshan, J.~R., Parvaneh, V., Kadelburg, Z. (2013).
\newblock Some common fixed point results in ordered partial b-metric spaces.
\newblock {\em J. Inequal. Appl., 2013}, 1--26.

\bibitem{Va15}
Van~An, T., Van~Dung, N., Kadelburg, Z., Radenovi\'{c}, S.  (2015).
\newblock Various generalizations of metric spaces and fixed point theorems.
\newblock {\em Rev. R. Acad. Cienc. Exactas Fis. Nat., Ser. A Mat.,
  109}, 175--198.

\bibitem{H01}
{Heinonen}, J. (2001).
\newblock {\em {Lectures on analysis on metric spaces}}.
\newblock Springer, New York, NY.

\bibitem{D96}
Dobos, J., Piotrowski, Z. (1996).
\newblock A note on metric preserving functions.
\newblock {\em Int. J. Math. Math. Sci., 19}, 199--200.

\bibitem{DM13}
Dovgoshey, O.,  Martio, O. (2013).
\newblock Functions transferring metrics to metrics.
\newblock {\em Beitr. Algebra Geom., 54}, 237--261.

\bibitem{KP18}
Khemaratchatakumthorn, T., Pongsriiam, P. (2018).
\newblock Remarks on {$b$}-metric and metric-preserving functions.
\newblock {\em Math. Slovaca, 68}(5), 1009--1016.

\bibitem{SKP20}
Samphavat, S., Khemaratchatakumthorn, T., Pongsriiam, P. (2020).
\newblock Remarks on b-metrics, ultrametrics, and metric-preserving functions.
\newblock {\em Math. Slovaca, 70}(1), 61--70.

\bibitem{HLP52}
Hardy, G.~H., Littlewood, J.~E., P\'{o}lya, G. (1952).
\newblock {\em Inequalities}.
\newblock Cambridge, at the University Press.

\bibitem{DD09}
Deza, M.~M., Deza, E. (2016).
\newblock {\em Encyclopedia of Distances}.
\newblock Fourth edition, Springer, Berlin.

\bibitem{Bu74}
{Buneman}, P. (1974).
\newblock {A note on the metric properties of trees}.
\newblock {\em {J. Comb. Theory, Ser. B}, 17}, 48--50.

\bibitem{SS03}
{Semple}, C., {Steel}, M. (2003).
\newblock {\em {Phylogenetics}}, volume~24.
\newblock Oxford: Oxford University Press.

\bibitem{AC11}
{Ailon}, N., {Charikar}, M. (2011).
\newblock {Fitting tree metrics: hierarchical clustering and phylogeny}.
\newblock {\em {SIAM J. Comput.}, 40}(5), 1275--1291.

\end{thebibliography}
\end{document}